\newtheorem{theorem}{Theorem}[section]
\newtheorem{proposition}[theorem]{Proposition}
\newtheorem{lemma}[theorem]{Lemma}
\newtheorem{corollary}[theorem]{Corollary}
\theoremstyle{definition}
\newcommand{\uxa}{\ensuremath{(\underline{X},\underline{A})}}
\newcommand{\cxx}{\ensuremath{(\underline{CX},\underline{X})}} 
\newcommand{\cyy}{\ensuremath{(\underline{CY},\underline{Y})}}
\newcommand{\starr}{\ensuremath{\mbox{star}}}
\newcommand{\link}{\ensuremath{\mbox{link}}} 
\newcommand{\starrK}{\ensuremath{\mbox{star}_{\,\overline{K}\,}}}
\newcommand{\linkK}{\ensuremath{\mbox{link}_{\,\overline{K}\,}}}
\newcommand{\zk}{\ensuremath{\mathcal{Z}_{K}}}
\newcommand{\hlgy}[1]{\ensuremath{H_{*}(#1)}}
\newcounter{bean}
\newenvironment{letterlist}{\begin{list}{\rm ({\alph{bean}})}
      {\usecounter{bean}\setlength{\rightmargin}{\leftmargin}}}
      {\end{list}}
\newcommand{\namedright}[3]{\ensuremath{#1\stackrel{#2}
 {\longrightarrow}#3}}
\newcommand{\nameddright}[5]{\ensuremath{#1\stackrel{#2}
 {\longrightarrow}#3\stackrel{#4}{\longrightarrow}#5}}
\newcommand{\namedddright}[7]{\ensuremath{#1\stackrel{#2}
 {\longrightarrow}#3\stackrel{#4}{\longrightarrow}#5
  \stackrel{#6}{\longrightarrow}#7}}
\newcommand{\qqed}{\hfill\Box}
\begin{document}
\title{Moore's conjecture for polyhedral products} 
\author{Yanlong Hao$^{\ast}$} 
\address{School of Mathematical Sciences and LPMC, Nankai University, 
    Tianjin 300071, P.R. China} 
\email{haoyanlong13@mail.nankai.edu.cn} 
\author{Qianwen Sun$^{\ast}$} 
\address{School of Mathematical Sciences and LPMC, Nankai University, 
   Tianjin 300071, P.R. China} 
\email{qwsun13@mail.nankai.edu.cn} 
\author{Stephen Theriault}
\address{Mathematical Sciences, University of Southampton, 
   Southampton SO17 1BJ, United Kingdom}
\email{S.D.Theriault@soton.ac.uk} 

\thanks{$^{*}$ Research supported by the National Natural Science Foundation 
   of China (No. 11571186)}

\subjclass[2010]{Primary 55Q05, Secondary 13F55, 55P62, 55U10.} 
\date{}
\keywords{Moore's conjecture, elliptic, hyperbolic, polyhedral product}

\begin{abstract} 
Moore's Conjecture is shown to hold for generalized moment-angle 
complexes and a criterion is proved that determines when a polyhedral 
product is elliptic or hyperbolic. 
\end{abstract}

\maketitle 

\section{Introduction} 
\label{sec:intro} 

Moore's Conjecture envisions a deep relationship between the rational 
and torsion homotopy groups of finite $CW$-complexes. Let $X$ be a 
finite $CW$-complex. The \emph{homotopy exponent} of $X$ at a prime $p$ 
is the least power of $p$ that annihilates the $p$-torsion in the homotopy 
groups of $X$. The space $X$ is \emph{elliptic} if it has finitely many 
rational homotopy groups and \emph{hyperbolic} if it has infinitely many 
rational homotopy groups. 
\medskip 

\noindent 
\textbf{Moore's Conjecture}:  
   Let $X$ be a finite, simply-connected $CW$-complex. Then the following are equivalent: 
   \begin{letterlist} 
      \item $X$ is elliptic; 
      \item $X$ has a finite homotopy exponent at every prime $p$; 
      \item $X$ has a finite homotopy exponent at some prime $p$. 
   \end{letterlist} 
\medskip 
The conjecture posits that the nature of the rational homotopy groups should 
have a profound impact on the nature of the, seemingly unrelated, 
torsion homotopy groups, and that torsion behaviour at one prime has a profound  
impact on torsion behaviour at all primes. The conjecture has been shown to 
hold in a number of cases. Elliptic spaces with finite exponents at all primes 
include spheres~\mbox{\cite{J,To}}, finite $H$-spaces~\cite{Lo}, $H$-spaces with finitely 
generated cohomology~\cite{CPSS}, and odd primary Moore spaces~\cite{N}. 
Hyperbolic spaces with no exponent at any prime include wedges of 
simply-connected spheres, most torsion-free two-cell complexes~\cite{NS}, and 
torsion-free suspensions~\cite{Se}. There are also partial results. In~\cite{MW} 
it was shown that if $X$ is elliptic then it has an exponent at all but 
finitely many primes, in~\cite{St} it was shown that if $X$ is hyperbolic 
and $\hlgy{\Omega X;\mathbb{Z}}$ is $p$-torsion free then, provided $p$ 
is large enough, $X$ has no exponent at $p$, and in~\cite{A} Moore's Conjecture 
was shown to hold for all but finitely many primes in the case of spaces 
having Lusternik-Schnirelmann category two.

Moore's conjecture is also related to an important phenomenon in rational 
homotopy theory. F\'{e}lix, Halperin and Thomas~\cite{FHT} proved the remarkable 
fact that a finite $CW$-complex is either elliptic or its total number of rational 
homotopy groups below dimension $n$ grows exponentially with~$n$. There 
is no hyperbolic space whose rational homotopy groups have polynomial growth. 

In this paper we consider Moore's conjecture, and the notions of being 
elliptic or hyperbolic, in the context of polyhedral products. 
Let $K$ be a simplicial complex on $m$ vertices.  For $1\leq i\leq m$,
let $(X_{i},A_{i})$ be a pair of pointed $CW$-complexes, where $A_{i}$ is 
a pointed subspace of $X_{i}$. Let $\uxa=\{(X_{i},A_{i})\}_{i=1}^{m}$ be 
the sequence of $CW$-pairs. For each simplex (face) $\sigma\in K$, let 
$\uxa^{\sigma}$ be the subspace of $\prod_{i=1}^{m} X_{i}$ defined by
\[\uxa^{\sigma}=\prod_{i=1}^{m}\overline{X}_{i}\qquad
       \mbox{where}\qquad\overline{X}_{i}=\left\{\begin{array}{ll}
                                             X_{i} & \mbox{if $i\in\sigma$} \\
                                             A_{i} & \mbox{if $i\notin\sigma$}.
                                       \end{array}\right.\]
The \emph{polyhedral product} determined by \uxa\ and $K$ is
\[\uxa^{K}=\bigcup_{\sigma\in K}\uxa^{\sigma}\subseteq\prod_{i=1}^{m} X_{i}.\]
The topology of polyhedral products has received a great deal of attention 
recently due to their central role in toric topology~\cite{BBCG1,BP,GT1,GT2,IK}. 
Important special cases include \emph{moment-angle complexes} $\zk$,  
when each pair $(X_{i},A_{i})$ equals $(D^{2},S^{1})$, and \emph{generalized 
moment-angle complexes} $\zk(D^{n},S^{n-1})$, when each pair $(X_{i},A_{i})$ 
equals $(D^{n},S^{n-1})$ for $n\geq 2$. By~\cite{BP}, each generalized 
moment-angle complex is a finite, simply-connected $CW$-complex, so Moore's Conjecture 
may be considered. 

To state our results some definitions are needed. Write $[m]$ for the vertex 
set $\{1,\ldots,m\}$. Let~$\Delta^{m-1}$ be the standard $m$-simplex with 
vertex set $[m]$. The faces of~$\Delta^{m-1}$ can be identified with sequences 
$(i_{1},\ldots,i_{k})$ for $1\leq i_{1}<\cdots <i_{k}\leq m$. If $K$ is a simplicial 
complex on the vertex set $[m]$ then a sequence $\sigma=(i_{1},\ldots,i_{k})$ 
is a \emph{missing face} of $K$ if $\sigma\notin K$. It is a \emph{minimal 
missing face} of $K$ if no proper subsequence of $\sigma$ is a missing face of $K$.  

\begin{theorem} 
   \label{intromacmoore} 
   Let $K$ be a simplicial complex on the vertex set $[m]$ and let $\uxa$ 
   be any sequence of pairs $(D^{n_{i}},S^{n_{i}-1})$ with $n_{i}\geq 2$ 
   for $1\leq i\leq m$. Then: 
   \begin{letterlist} 
      \item $\uxa^{K}$ is elliptic if and only if the minimal missing 
                faces of $K$ are mutually disjoint; 
      \item Moore's conjecture holds for $\uxa^{K}$. 
   \end{letterlist} 
\end{theorem}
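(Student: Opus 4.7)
The plan is to handle (a) first and then leverage its structural conclusion for (b).

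For the sufficient direction of (a), assume the minimal missing faces $\sigma_{1},\ldots,\sigma_{k}$ of $K$ are pairwise disjoint, and set $R=[m]\setminus\bigcup_{j}\sigma_{j}$. A subset $\rho\subseteq[m]$ is a face of $K$ exactly when it contains no $\sigma_{j}$, and pairwise disjointness makes this equivalent to $\rho\cap\sigma_{j}\in\partial\Delta^{\sigma_{j}}$ for every $j$. Hence $K=\partial\Delta^{\sigma_{1}}\ast\cdots\ast\partial\Delta^{\sigma_{k}}\ast\Delta^{R}$. Since the polyhedral product converts simplicial joins into Cartesian products, $\Delta^{R}$ contributes contractible disk factors, and $(\underline{D},\underline{S})^{\partial\Delta^{\sigma_{j}}}\cong\partial(\prod_{i\in\sigma_{j}}D^{n_{i}})\cong S^{N_{j}}$ with $N_{j}=-1+\sum_{i\in\sigma_{j}}n_{i}\geq 3$, one obtains $\uxa^{K}\simeq\prod_{j=1}^{k}S^{N_{j}}$. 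This finite product of simply-connected spheres is elliptic, and the sphere exponent theorems of James/Toda at $p=2$ and Cohen--Moore--Neisendorfer at odd primes supply a finite $p$-exponent for every prime, simultaneously settling the elliptic half of (b).

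For the necessary direction of (a), suppose minimal missing faces $\sigma,\tau$ of $K$ share a vertex and set $L=\sigma\cup\tau$. Projection onto the $L$-coordinates, with section inserting basepoints in the coordinates outside $L$, exhibits the full-subcomplex polyhedral product $(\underline{X}_{L},\underline{A}_{L})^{K_{L}}$ as a retract of $\uxa^{K}$. Since ellipticity and exponent finiteness are inherited by retracts, it suffices to prove this retract is hyperbolic and has no finite $p$-exponent at any prime. In the minimal case $|\sigma|=|\tau|=2$ (so $|L|=3$), the polyhedral product equals the union $(D^{n_{1}}\times S^{n_{2}-1}\times D^{n_{3}})\cup(S^{n_{1}-1}\times D^{n_{2}}\times S^{n_{3}-1})$ glued along the common intersection $(S^{n_{1}-1})^{3}$, and a direct Mayer--Vietoris calculation identifies the pushout as $S^{N_{\sigma}}\vee S^{N_{\tau}}$ with $N_{\sigma},N_{\tau}\geq 3$. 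For more general overlapping minimal missing faces, the plan is to restrict further around a shared vertex $v\in\sigma\cap\tau$, exploiting the local star/link structure of $v$ within $K_{L}$ via a homotopy-pushout decomposition to extract a wedge of at least two simply-connected spheres as a retract.

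Once a wedge $W=S^{a}\vee S^{b}$ with $a,b\geq 3$ is produced as a retract of $\uxa^{K}$, both remaining claims follow. The space $W$ is hyperbolic because $\pi_{*}(W)\otimes\mathbb{Q}$ is the free graded Lie algebra on two generators by Hilton's theorem, and hyperbolicity passes to retracts of finite $CW$-complexes. Moreover, the Hilton--Milnor theorem decomposes $\Omega W$ weakly as an infinite product $\prod_{\alpha}\Omega S^{d_{\alpha}}$ with $d_{\alpha}\to\infty$, and since the $p$-exponent of $S^{d}$ tends to infinity with $d$ for every prime $p$, $W$ has no finite $p$-exponent at any prime; this propagates to $\uxa^{K}$ through the retract. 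The chief obstacle is the wedge-retract construction in the general overlap case: the $|L|=3$ pushout calculation is elementary, but for arbitrary $|\sigma|,|\tau|$ one must carry out a more delicate combinatorial and homotopy-pushout analysis at a shared vertex, which is where the bulk of the technical effort should lie.
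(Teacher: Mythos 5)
Your sufficient direction of (a) is essentially complete and matches the paper's: pairwise disjoint minimal missing faces give $K\cong\partial\sigma_{1}\ast\cdots\ast\partial\sigma_{k}\ast\Delta^{R}$, the polyhedral product converts joins into products, and $(\underline{D},\underline{S})^{\partial\sigma_{j}}\cong S^{N_{j}}$, so $\uxa^{K}$ is a finite product of simply-connected spheres, which is elliptic and has finite exponents at all primes. (The paper routes this through the loop-space splitting of Theorem~\ref{cxxreduction}, but since each $X_{i}=D^{n_{i}}$ is contractible your direct argument is equivalent.) Your closing step --- a wedge of two simply-connected spheres is hyperbolic and, by Hilton--Milnor together with lower bounds on sphere exponents, has no exponent at any prime, and both properties pass to any space it retracts off --- is also exactly the paper's appeal to Neisendorfer--Selick.

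The genuine gap is precisely where you locate it: producing a wedge of two simply-connected spheres as a retract of $\uxa^{K}$ when two minimal missing faces $\sigma,\tau$ overlap. You only carry this out when $|\sigma|=|\tau|=2$, and even there the identification is not quite right: for $L=\{1,2,3\}$ with missing faces $\{1,2\}$ and $\{2,3\}$ the Mayer--Vietoris computation gives $S^{n_{1}+n_{2}-1}\vee S^{n_{2}+n_{3}-1}\vee S^{n_{1}+n_{2}+n_{3}-2}$, a wedge of \emph{three} spheres, not two (harmless for your conclusion, but a sign the computation was not finished). For general $|\sigma|,|\tau|$ you offer only a ``plan,'' and that plan is exactly the paper's Proposition~\ref{retract}, whose proof is the technical heart of the argument and involves steps you do not supply: (i) $K_{L}$ may have minimal missing faces other than $\sigma$ and $\tau$, so one passes to an auxiliary complex $\overline{K}$ with $MMF(\overline{K})=\{\sigma,\tau\}$ and produces a left inverse for the composite $\cxx^{\partial\sigma}\vee\cxx^{\partial\tau}\rightarrow\cxx^{K_{L}}\rightarrow\cxx^{\overline{K}}$; (ii) the star--link--restriction pushout at a shared vertex $w$, combined with the fact that $\overline{K}\backslash w$ is a full simplex (Lemma~\ref{Krsimplex}), yields $\cxx^{\overline{K}}\simeq\Sigma\cxx^{\linkK(w)}\wedge X_{w}$; (iii) one must show $\partial\overline{\sigma}$ and $\partial\overline{\tau}$ are full subcomplexes of $\linkK(w)$ on distinct index sets (Lemma~\ref{mfstar}, Corollary~\ref{mflink}) and invoke the Bahri--Bendersky--Cohen--Gitler splitting of $\Sigma\cxx^{\linkK(w)}$ to extract the wedge compatibly with the inclusions $\partial\sigma,\partial\tau\subseteq\overline{K}$. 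Until this is done, the hyperbolic half of (a) and the no-exponent half of (b) are established only when both overlapping minimal missing faces are edges.
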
 

In particular, Theorem~\ref{intromacmoore} includes generalized moment-angle 
complexes $\zk(D^{n},S^{n-1})$ for $n\geq 2$ as a special case. Part~(a) of 
Theorem~\ref{intromacmoore} was proved by~\cite{BBCG1} in the special case 
of the moment-angle complex~$\zk$, although part~(b) was not. The restriction 
to $n\geq 2$ is made to ensure that certain retractions constructed in 
Theorem~\ref{moore} involve wedges of simply-connected spheres which 
are hyperbolic, rather than wedge of circles which are Eilenberg-MacLane spaces.  

We also give a general criterion for when a polyhedral product is elliptic or 
hyperbolic. This generalizes and reformulates in more combinatorial terms 
results obtained by F\'{e}lix and Tanr\'{e}~\cite{FT}.  

\begin{theorem} 
   \label{introellhyp} 
   Let $K$ be a simplicial complex on the vertex set $[m]$ and let $\uxa$ 
   be any sequence of pointed, path-connected pairs. For $1\leq i\leq m$, let $Y_{i}$ 
   be the homotopy fibre of the inclusion 
   \(\namedright{A_{i}}{}{X_{i}}\) 
   and suppose that each $Y_{i}$ is rationally nontrivial. Then the polyhedral 
   product $\uxa^{K}$ is elliptic if and only if three conditions hold: 
   \begin{itemize} 
      \item[(i)] each $X_{i}$ is elliptic; 
      \item[(ii)] all the minimal missing faces of $K$ are mutually disjoint; 
      \item[(iii)] if $v$ is a vertex of a minimal missing face of $K$ then 
                      $Y_{v}$ is rationally homotopy equivalent to a sphere.  
   \end{itemize} 
\end{theorem}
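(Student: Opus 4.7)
The plan is to separate $\uxa^{K}$ into a ``product part'' and a ``fibre part'' via the fibration
\[
\nameddright{\cyy^{K}}{}{\uxa^{K}}{}{\prod_{i=1}^{m}X_{i}}
\]
that identifies the homotopy fibre of $\uxa^{K}\hookrightarrow\prod X_{i}$ as the polyhedral product over the pairs $(CY_{i},Y_{i})$. The rational long exact sequence of this fibration reduces the problem to: $\uxa^{K}$ is elliptic if and only if $\prod X_{i}$ is elliptic (which is condition~(i)) and $\cyy^{K}$ is elliptic. Thus the remaining task is to characterize ellipticity of $\cyy^{K}$ in terms of conditions~(ii) and~(iii).

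The second ingredient is a retraction principle. For every $I\subseteq[m]$ the projection onto the coordinates in~$I$ restricts to a retraction $\uxa^{K}\to(\underline{X}|_{I},\underline{A}|_{I})^{K_{I}}$, where $K_{I}$ is the full subcomplex on~$I$; this is well-defined because $\sigma\cap I\in K_{I}$ for each $\sigma\in K$, and it is split by inserting basepoints of $A_{j}$ for $j\notin I$. The same retraction applies to the fibres $(\underline{CY}|_{I},\underline{Y}|_{I})^{K_{I}}$. Hence ellipticity passes to every full subcomplex of $K$ and to its associated fibre polyhedral product.

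For \emph{necessity}, assume $\uxa^{K}$ is elliptic. Taking $I=\{i\}$ gives (i). For (iii), if $\sigma=\{i_{1},\ldots,i_{k}\}$ is a minimal missing face, setting $I=\sigma$ yields $K_{\sigma}=\partial\sigma$; the fibre becomes the iterated join $Y_{i_{1}}\ast\cdots\ast Y_{i_{k}}\simeq\Sigma^{k-1}(Y_{i_{1}}\wedge\cdots\wedge Y_{i_{k}})$, and a simply-connected suspension is elliptic only if it is rationally a sphere, which (since each $Y_{i_{j}}$ is rationally nontrivial) forces every $Y_{i_{j}}$ to be a rational sphere. For (ii), if minimal missing faces $\sigma_{1},\sigma_{2}$ share a vertex, taking $I=\sigma_{1}\cup\sigma_{2}$ and using (iii) to rationally replace each $(CY_{v},Y_{v})$ by the disk-sphere pair $(D^{n_{v}+1},S^{n_{v}})$ turns $(\underline{CY}|_{I},\underline{Y}|_{I})^{K_{I}}$ into a rational generalized moment-angle complex with intersecting minimal missing faces, which is hyperbolic by Theorem~\ref{intromacmoore}(a)---contradicting ellipticity of the retract.

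For \emph{sufficiency}, I would first observe that a vertex $v$ not lying in any minimal missing face must be a cone point of $K$: otherwise a maximal face $\tau$ omitting $v$ would produce a minimal missing face inside $\{v\}\cup\tau$ through~$v$. Writing $K=\Delta^{V}\ast K_{0}$ with $V$ the set of cone-point vertices yields a product splitting $\uxa^{K}\cong\prod_{v\in V}X_{v}\times(\underline{X}|_{[m]\setminus V},\underline{A}|_{[m]\setminus V})^{K_{0}}$. The first factor is elliptic by~(i). For the second, every vertex of $K_{0}$ lies in a minimal missing face, so by~(iii) each relevant $Y_{v}$ is rationally a sphere; the rational replacement above turns the fibre into a generalized moment-angle complex whose minimal missing faces are disjoint by~(ii), so Theorem~\ref{intromacmoore}(a) makes it elliptic, and ellipticity of the second factor follows through the fibration together with~(i). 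The principal technical hurdle is carrying out the rational replacement $(CY_{v},Y_{v})\simeq_{\mathbb{Q}}(D^{n_{v}+1},S^{n_{v}})$ at the level of polyhedral products so that Theorem~\ref{intromacmoore} can be invoked directly; this ultimately rests on showing the polyhedral product is a homotopy functor of the input pairs well enough to preserve rational equivalences.
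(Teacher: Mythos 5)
Your overall architecture matches the paper's: reduce via the fibration $\cyy^{K}\rightarrow\uxa^{K}\rightarrow\prod_{i=1}^{m}X_{i}$ of Theorem~\ref{cxxreduction}, use full-subcomplex retractions, and exploit the identification $\cyy^{\partial\sigma}\simeq\Sigma^{k-1}Y_{i_{1}}\wedge\cdots\wedge Y_{i_{k}}$. Your derivations of (i) and (iii) are essentially the paper's. One repairable imprecision first: the ``rational long exact sequence'' of the fibration does not by itself show that ellipticity of $\uxa^{K}$ forces ellipticity of $\cyy^{K}$ (the path--loop fibration over a hyperbolic base has elliptic total space and hyperbolic base and fibre). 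What you actually need is the loop-space splitting $\Omega\uxa^{K}\simeq(\prod_{i=1}^{m}\Omega X_{i})\times\Omega\cyy^{K}$, which is the second assertion of Theorem~\ref{cxxreduction}; cite that instead.

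The genuine gap is in your proof of the necessity of (ii), which is the heart of the theorem. It hinges on replacing each pair $(CY_{v},Y_{v})$ by $(D^{n_{v}+1},S^{n_{v}})$ rationally \emph{at the level of polyhedral products} so as to quote Theorem~\ref{intromacmoore}(a). You flag this yourself as the principal hurdle, and it is not carried out: one must produce actual maps of pairs realizing the rational equivalences compatibly in all coordinates and prove they induce a rational equivalence of polyhedral products (a colimit or Mayer--Vietoris induction over the faces, plus connectivity hypotheses to upgrade homology equivalences to homotopy equivalences), none of which is supplied. The detour is also circular in spirit: Theorem~\ref{intromacmoore}(a) is itself deduced in the paper from Theorem~\ref{moore}(b), i.e.\ from Proposition~\ref{retract}, which already works for \emph{arbitrary} pairs. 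The paper's route is: when $\sigma_{1}$ and $\sigma_{2}$ intersect, $\cyy^{\partial\sigma_{1}}\vee\cyy^{\partial\sigma_{2}}$ retracts off $\cyy^{K_{I\cup J}}$ and hence off $\cyy^{K}$ after looping; since each wedge summand is a rationally nontrivial suspension, the wedge is rationally a wedge of at least two simply-connected spheres and is therefore hyperbolic --- no replacement is needed. The same comment applies to your sufficiency argument, which also routes through the replacement and Theorem~\ref{intromacmoore}(a); this is unnecessary, since under (ii) one has $\cyy^{K}\simeq\prod_{j=1}^{n}\cyy^{\partial\sigma_{j}}$ directly from the join decomposition of $K$, and (iii) together with Lemma~\ref{2egs}(b) makes each factor rationally a sphere. (Your observation that a vertex lying in no minimal missing face is a cone point is correct, but it is subsumed by that join decomposition.)
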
  

For example, let $K$ be the boundary of a pentagon. A result essentially due 
to MacGavran~\cite{Mac} shows that $\zk$ is diffeomorphic to the connected 
sum of $5$ copies of $S^{3}\times S^{4}$. It is well known that such a connected 
sum is hyperbolic. If the ingredient pairs of spaces change from 
$(D^{2},S^{1})$ to $(C\mathbb{C}P^{n},\mathbb{C}P^{n})$, where $C\mathbb{C}P^{n}$ 
is the cone on $\mathbb{C}P^{n}$, then the polyhedral product 
$(\underline{C\mathbb{C}P^{n}},\underline{\mathbb{C}P^{n}})^{K}$ is 
some analogue of a connected sum, but its homotopy type is not clear. 
Observe that each $Y_{i}$ in this case is $\Omega\mathbb{C}P^{n}$, so is 
rationally nontrivial, and the minimal missing faces of $K$ are not mutually disjoint. 
So Theorem~\ref{introellhyp} implies that 
$(\underline{C\mathbb{C}P^{n}},\underline{\mathbb{C}P^{n}})^{K}$ is hyperbolic. 
The most pertinent point here is that this determination is made without any 
reference to the standard differential graded Lie algebra tools commonly used 
to decide ellipticity or hyperbolicity. 

The authors would like to thank the referee for suggestions that 
improved the clarity of the paper.

\section{Polyhedral product ingredients} 
\label{sec:polyprods} 

This section contains the properties of polyhedral products that will 
be needed. The main results are Theorems~\ref{cyyfib} and~\ref{cyysplit}, which are 
of independent interest. 

\begin{theorem} 
   \label{cyyfib} 
   Let $K$ be a simplicial complex on the vertex set $[m]$ and let 
   $\uxa$ be any sequence of pointed, path-connected $CW$-pairs. Then 
   there is a homotopy fibration 
   \[\nameddright{\cyy^{K}}{}{\uxa^{K}}{}{\prod_{i=1}^{m} X_{i}}\]  
   where, for $1\leq i\leq m$, $Y_{i}$ is the homotopy fibre of the inclusion 
   \(\namedright{A_{i}}{}{X_{i}}\). 
\end{theorem} 

\begin{proof} 
In general, let 
\(f\colon\namedright{B}{}{Z}\) 
be a pointed, continuous map, let $I=[0,1]$ be the unit interval with basepoint $0$, 
and let $PZ$ be the path space of $Z$. Then the homotopy fibre of $f$ is the 
pullback of $f$ and the evaluation map 
\(ev_{1}\colon\namedright{PZ}{}{Z}\), 
where $ev_{1}(\omega)=\omega(1)$. In our case, we are given $m$ pairs 
of spaces $(X_{i},A_{i})$. The homotopy pullback of the identity map 
\(1_{X_{i}}\colon\namedright{X_{i}}{}{X_{i}}\) 
is $PX_{i}$, and~$Y_{i}$ is defined as the homotopy pullback of the inclusion 
\(j_{i}\colon\namedright{A_{i}}{}{X_{i}}\). 
Note that as $j_{i}$ is a subspace inclusion then $Y_{i}$ is the inverse 
image $ev_{1}^{-1}(A_{i})\subseteq PX_{i}$.   

Let $\sigma$ be a face in $K$. By the definition of a 
polyhedral product, $\uxa^{\sigma}=\overline{X}_{1}\times\cdots\times\overline{X}_{m}$, 
where~$\overline{X}_{i}$ is $X_{i}$ if $i\in\sigma$ and is $A_{i}$ if $i\notin\sigma$. 
Consider the homotopy pullback 
\[\diagram 
      Q\rto\dto & P(X_{1}\times\cdots\times X_{m})\dto^{ev_{1}} \\ 
      \uxa^{\sigma}\rto^-{i^{\sigma}} & X_{1}\times\cdots\times X_{m}  
  \enddiagram\] 
where $i^{\sigma}$ is the inclusion. 
Observe that $P(X_{1}\times\cdots\times\cdots X_{m})$ is homeomorphic to 
$PX_{1}\times\cdots\times PX_{m}$ and under this homeomorphism $ev_{1}$ 
translates into a product of the $m$ evaluation maps $ev_{1}$ on each $PX_{i}$.  
As the product of pullbacks is a pullback, we see that $Q$ is homeomorphic to 
$\overline{Y}_{1}\times\cdots\times\overline{Y}_{m}$, where~$\overline{Y}_{i}$ 
is $PX_{i}$ if $i\in\sigma$ and is $Y_{i}$ if $i\notin\sigma$. That is, 
$Q$ is homeomorphic to $(\underline{PX},\underline{Y})^{\sigma}$. Moreover, 
since $i^{\sigma}$ is an inclusion, $(\underline{PX},\underline{Y})^{\sigma}$ is 
the inverse image 
$(ev_{1}\times\cdots\times ev_{1})^{-1}(\uxa^{\sigma})\subseteq PX_{1}\times\cdots\times PX_{m}$. 
Now $\uxa^{K}$ is the union 
of the spaces $\uxa^{\sigma}$ for all $\sigma\in K$, where intersections have 
been identified. Since inverse images preserve unions and intersections, we 
obtain that the homotopy fibre of the inclusion 
\(\namedright{\uxa^{K}}{}{X_{1}\times\cdots\times X_{m}}\) 
is homeomorphic to $(\underline{PX},\underline{Y})^{K}$. 

Finally, since $PX_{i}$ is contractible, the inclusion 
\(\namedright{Y_{i}}{}{PX_{i}}\) 
extends to a map 
\(\namedright{CY_{i}}{}{PX_{i}}\) 
which is a homotopy equivalence. Thus the induced map of pairs 
\(\namedright{(CY_{i},Y_{i})}{}{(PX_{i},Y_{i})}\) 
is a homotopy equivalence. Hence the homotopy fibre of the inclusion 
\(\namedright{\uxa^{K}}{}{X_{1}\times\cdots\times X_{m}}\) 
is homotopy equivalent to $(\underline{CY},\underline{Y})^{K}$. 
\end{proof} 

Next, we show that the homotopy fibration in Theorem~\ref{cyyfib} splits after looping.  
Let $K$ be a simplicial complex on the vertex set $[m]$. If $I\subseteq [m]$ 
then the \emph{full subcomplex} $K_{I}$ of~$K$ is defined as the simplicial 
complex 
\[K_{I}=\bigcup\{\sigma\in K\mid\mbox{the vertex set of $\sigma$ is in $I$}\}.\] 
The definition of $K_{I}$ implies that the inclusion 
\(\namedright{K_{I}}{}{K}\) 
is a map of simplicial complexes. This induces a map of polyhedral products 
\(\namedright{\uxa^{K_{I}}}{}{\uxa^{K}}\). 
There is no retraction of $K_{I}$ off $K$ as simplicial complexes, however, 
in~\cite{DS} it was shown that there is nevertheless a retraction 
of $\uxa^{K_{I}}$ off $\uxa^{K}$. 

\begin{proposition} 
   \label{fullsubcomplex} 
   Let $K$ be a simplicial complex on the vertex set $[m]$ and let $\uxa$ 
   be any sequence of pointed, path-connected $CW$-pairs. Let $I\subseteq [m]$. 
   Then the inclusion 
   \(\namedright{\uxa^{K_{I}}}{}{\uxa^{K}}\) 
   has a left inverse.~$\qqed$ 
\end{proposition}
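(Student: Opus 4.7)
The plan is to produce the retraction $r\colon \uxa^K \to \uxa^{K_I}$ as a coordinate projection. I view $\uxa^{K_I}$ as the polyhedral product built from the sub-sequence of pairs indexed by $I$, sitting inside $\prod_{i\in I} X_i$. The induced map $\iota\colon \uxa^{K_I} \to \uxa^K$ then sends $(x_i)_{i\in I}$ to the tuple $(y_1,\ldots,y_m) \in \prod_{i=1}^m X_i$ with $y_i = x_i$ for $i \in I$ and $y_i = \ast$ for $i \notin I$. Using that $\ast \in A_i$ for each $i$, one checks that for every $\sigma \in K_I$ this lands in $\uxa^{\sigma} \subseteq \uxa^K$, so $\iota$ is well defined.

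For the left inverse, I define $r\colon \prod_{i=1}^m X_i \to \prod_{i\in I} X_i$ to be the projection onto the coordinates indexed by $I$. The key step is to verify that $r$ restricts to a continuous map $\uxa^K \to \uxa^{K_I}$. Given $\sigma \in K$ and a point $(x_1,\ldots,x_m) \in \uxa^{\sigma}$, the image $(x_i)_{i\in I}$ satisfies $x_i \in X_i$ for $i \in \sigma \cap I$ and $x_i \in A_i$ for $i \in I \setminus \sigma$, so it lies in the $\sigma \cap I$ piece of the polyhedral product for $K_I$. Since $\sigma \cap I \subseteq \sigma \in K$ and $\sigma \cap I$ has vertex set in $I$, we have $\sigma \cap I \in K_I$. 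Passing to the union over $\sigma \in K$ produces the required restriction.

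Finally, $r \circ \iota$ takes $(x_i)_{i\in I}$, inserts basepoints at the coordinates outside $I$, then projects those extra coordinates away, recovering $(x_i)_{i\in I}$; hence $r \circ \iota$ is the identity on $\uxa^{K_I}$. The argument is essentially combinatorial bookkeeping, and I do not anticipate any serious obstacle; the only mildly subtle point is aligning conventions so that basepoint insertion respects the polyhedral product structure and coordinate projection sends a face $\sigma \in K$ to the face $\sigma \cap I \in K_I$.
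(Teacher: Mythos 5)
Your proof is correct, and it is exactly the standard argument of Denham and Suciu: the paper does not reprove this proposition but cites \cite{DS}, where the left inverse is obtained, as you do, by checking that the coordinate projection $\prod_{i=1}^{m}X_{i}\to\prod_{i\in I}X_{i}$ carries the piece $\uxa^{\sigma}$ into the piece indexed by $\sigma\cap I\in K_{I}$, while the inclusion inserts basepoints in the coordinates outside $I$. Your bookkeeping (including the observation that $\sigma\cap I\in K_{I}$ because $K$ is closed under passing to subsets) is accurate, and the convention you adopt for the inclusion matches the one the paper uses later, e.g.\ when it identifies $\uxa^{K_{\{i\}}}=X_{i}$ inside $\uxa^{K}$.
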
 

\begin{theorem} 
   \label{cyysplit} 
   Let 
   \(\nameddright{\cyy^{K}}{}{\uxa^{K}}{}{\prod_{i=1}^{m} X_{i}}\) 
   be the homotopy fibration in Theorem~\ref{cyyfib}. Then there is a homotopy equivalence 
   \[\Omega\uxa^{K}\simeq(\prod_{i=1}^{m}\Omega X_{i})\times\Omega\cyy^{K}.\] 
\end{theorem} 

\begin{proof} 
For $1\leq i\leq m$, let $I_{i}=\{i\}$. Observe that the 
full subcomplex $K_{I_{i}}$ of $K$ is just the vertex $\{i\}$. By the 
definition of the polyhedral product, $\uxa^{K_{I_{i}}}=X_{i}$. 
Proposition~\ref{fullsubcomplex} therefore implies that $X_{i}$ retracts 
off $\uxa^{K}$. Explicitly, the composite 
\(\namedddright{X_{i}=\uxa^{K_{I_{i}}}}{}{\uxa^{K}}{}{\prod_{i=1}^{m} X_{i}}{\mbox{proj}}{X_{i}}\) 
is the identity map. After looping, the loop maps 
\(\namedright{\Omega X_{i}}{}{\Omega\uxa^{K}}\) 
may be multiplied together to obtain a map 
\(\namedright{\prod_{i=1}^{m}\Omega X_{i}}{}{\Omega\uxa^{K}}\) 
which is a right homotopy inverse of the map 
\(\namedright{\Omega\uxa^{K}}{}{\prod_{i=1}^{m}\Omega X_{i}}\). 
Hence, if $\mu$ is the loop multiplication on $\Omega\uxa^{K}$, then 
the composite 
\[\nameddright{(\prod_{i=1}^{m}\Omega X_{i})\times\Omega\cyy^{K}}{}  
      {\Omega\uxa^{K}\times\Omega\uxa^{K}}{\mu}{\Omega\uxa^{K}}\] 
is a homotopy equivalence. 
\end{proof} 

Theorem~\ref{cyysplit} implies that homotopy group information 
about $\uxa^{K}$ is determined by that of the ingredient spaces $X_{i}$ and 
$\cyy^{K}$. This is useful because the spaces $\cyy^{K}$ are much better 
understood than the spaces $\uxa^{K}$. 

This section concludes with the statement of two other results which will 
be used later. The first, proved in~\cite{GT2}, relates pushouts 
of simplicial complexes to pushouts of polyhedral products. 

\begin{proposition}
   \label{uxaunion}
   Let $K$ be a simplicial complex on the vertex set $[m]$. Suppose that 
   there is a pushout of simplicial complexes 
   \[\diagram
           L\rto\dto & K_{2}\dto \\
           K_{1}\rto & K
     \enddiagram\] 
   Let $L^{\circ}$, $K^{\circ}_{1}$ and $K^{\circ}_{1}$ be $L$, 
   $K_{1}$ and $K_{2}$ regarded as simplicial complexes on the same 
   vertex set as $K$. Then there is a pushout of polyhedral products 
   \[\diagram
           \uxa^{L^{\circ}}\rto\dto & \uxa^{K^{\circ}_{2}}\dto \\
           \uxa^{K^{\circ}_{1}}\rto & \uxa^{K}.
     \enddiagram\] 
\end{proposition}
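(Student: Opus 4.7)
The plan is to reduce the claim to a set-theoretic identity inside $\prod_{i=1}^m X_i$. The key observation is that for any point $x=(x_1,\ldots,x_m)\in\prod_{i=1}^m X_i$, setting $\sigma(x)=\{i\in[m]\mid x_i\notin A_i\}$ gives $x\in\uxa^{K}$ if and only if $\sigma(x)\in K$. This follows immediately from the definition $\uxa^{K}=\bigcup_{\sigma\in K}\uxa^{\sigma}$, since $x\in\uxa^{\sigma}$ is precisely the condition $\sigma(x)\subseteq\sigma$ and $K$ is closed under faces. Consequently the polyhedral product depends on the simplicial complex only through its set of faces on the chosen vertex set.

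Next I would interpret the pushout on the simplicial side. Because $L\to K_{1}$ and $L\to K_{2}$ are inclusions of subcomplexes, viewing all four complexes on the common vertex set of $K$ reduces the pushout square to the set-theoretic identities $K = K_{1}^{\circ}\cup K_{2}^{\circ}$ and $L^{\circ} = K_{1}^{\circ}\cap K_{2}^{\circ}$ at the level of simplices. Combined with the pointwise characterization above, this yields
\[\uxa^{K_{1}^{\circ}}\cup\uxa^{K_{2}^{\circ}}=\uxa^{K} \quad\text{and}\quad \uxa^{K_{1}^{\circ}}\cap\uxa^{K_{2}^{\circ}}=\uxa^{L^{\circ}}\]
as subspaces of $\prod_{i=1}^m X_{i}$, so the induced square of polyhedral products is a pushout of sets.

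Finally I would promote this to a pushout of topological spaces by observing that each polyhedral product is a $CW$-complex, each $\uxa^{K_{i}^{\circ}}$ is a closed subcomplex of $\uxa^{K}$, and $\uxa^{L^{\circ}}$ is their common subcomplex; a pushout of $CW$-complexes along such inclusions is automatic. There is no substantive obstacle here: the argument is essentially bookkeeping, verifying that the polyhedral product construction commutes with unions and intersections of simplicial complexes on a common vertex set. The only mildly subtle step is keeping track of the fact that $K_{1}$ and $K_{2}$ may use fewer than $m$ vertices, while their polyhedral products are formed on the full vertex set of $K$, which is exactly the role of the $(-)^{\circ}$ notation.
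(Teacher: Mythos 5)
Your argument is correct. The paper attributes this proposition to~\cite{GT2} and does not reproduce a proof, so there is nothing internal to compare against; your proof stands as a valid self-contained one. The crux is the observation that membership of $x=(x_1,\ldots,x_m)$ in $\uxa^{K}$ depends only on the set $\sigma(x)=\{i\mid x_i\notin A_i\}$ and on whether $\sigma(x)$ is a face of $K$, which immediately gives the identifications $\uxa^{K_1^\circ}\cup\uxa^{K_2^\circ}=\uxa^{K}$ and $\uxa^{K_1^\circ}\cap\uxa^{K_2^\circ}=\uxa^{L^\circ}$; the passage from a set-level pushout to a topological one works because each of the two subspaces is a closed subcomplex of $\uxa^{K}$ (each $A_i$ being a subcomplex, hence closed, in $X_i$). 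One thing you take for granted and should say out loud: the statement's ``pushout of simplicial complexes'' combined with the $(-)^\circ$ convention is being read as saying that $L$, $K_1$, $K_2$ are subcomplexes of $K$ with $L=K_1^\circ\cap K_2^\circ$ and $K=K_1^\circ\cup K_2^\circ$; that is indeed the intended reading (it is exactly how the proposition is applied to the star--link--restriction pushout), but an arbitrary pushout square of simplicial maps would not reduce so cleanly. A commonly seen alternative route, which I believe is closer to what~\cite{GT2} does, writes $\uxa^{K}$ as the colimit of the functor $\sigma\mapsto\uxa^{\sigma}$ over the face poset of $K$ and then observes that the pushout of face posets induces a pushout of the corresponding colimits; your pointwise characterization is more elementary and avoids any categorical machinery, at the cost of needing the explicit closed-subcomplex argument at the end.
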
 
\vspace{-1cm}~$\qqed$\medskip 

Second, we give two examples where the homotopy type of $\cyy^{K}$ 
is explicitly identified. Part~(a) in Lemma~\ref{2egs} is immediate from 
the definition of the polyhedral product, while part~(b) was proved by Porter~\cite{P} 
when each $Y_{i}$ is a loop space and more generally in~\cite{GT2}.  

\begin{lemma} 
   \label{2egs} 
   Let $Y_{1},\ldots,Y_{m}$ be path-connected spaces. Then the following hold: 
   \begin{letterlist} 
      \item $\cyy^{\Delta^{m-1}}=\prod_{i=1}^{m} CY_{i}$; 
      \item $\cyy^{\partial\Delta^{m-1}}\simeq\Sigma^{m-1} Y_{1}\wedge\cdots\wedge Y_{m}$. 
   \end{letterlist} 
\end{lemma}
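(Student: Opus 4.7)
The plan is to deduce (a) directly from the definition of the polyhedral product and to prove (b) by induction on $m$, using a simplicial pushout decomposition of $\partial\Delta^{m-1}$ together with Proposition~\ref{uxaunion}.

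For (a), when $K = \Delta^{m-1}$ the unique maximal simplex is $\sigma = [m]$ and contributes $\prod_{i=1}^{m} CY_{i}$ to $\cyy^{K}$, while every proper face $\tau \subsetneq [m]$ contributes a subspace obtained by replacing some $CY_{i}$ by $Y_{i} \subseteq CY_{i}$. Their union is therefore $\prod_{i=1}^{m} CY_{i}$.

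For (b), I would induct on $m \geq 2$. The base case $m = 2$ is the observation that $\cyy^{\partial\Delta^{1}}$ is the pushout of $CY_{1} \times Y_{2} \leftarrow Y_{1} \times Y_{2} \rightarrow Y_{1} \times CY_{2}$, i.e.\ the join $Y_{1} \ast Y_{2} \simeq \Sigma(Y_{1} \wedge Y_{2})$. For the inductive step, single out the vertex $m$ and decompose $\partial\Delta^{m-1}$ as the pushout of simplicial complexes on the common vertex set $[m]$ defined by $L = \partial\Delta^{m-2}$ on $[m-1]$, $K_{1} = \Delta^{m-2}$ on $[m-1]$, and $K_{2} = \partial\Delta^{m-2} \ast \{m\}$ (the simplicial join with the singleton vertex $m$, which is also the closed star of $m$ in $\partial\Delta^{m-1}$). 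Applying Proposition~\ref{uxaunion} and unwinding the definitions, the corresponding pushout square of polyhedral products has corners $\cyy^{\partial\Delta^{m-2}} \times Y_{m}$ (upper-left), $\bigl(\prod_{i=1}^{m-1} CY_{i}\bigr) \times Y_{m}$ (upper-right), and $\cyy^{\partial\Delta^{m-2}} \times CY_{m}$ (lower-left), with pushout $\cyy^{\partial\Delta^{m-1}}$.

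Since $\prod_{i=1}^{m-1} CY_{i}$ and $CY_{m}$ are contractible, the upper-right and lower-left corners deformation retract (compatibly with the pushout maps) to $Y_{m}$ and $\cyy^{\partial\Delta^{m-2}}$ respectively, at which point the two pushout maps become the factor projections out of $\cyy^{\partial\Delta^{m-2}} \times Y_{m}$. The resulting homotopy pushout is by definition the join $\cyy^{\partial\Delta^{m-2}} \ast Y_{m} \simeq \Sigma(\cyy^{\partial\Delta^{m-2}} \wedge Y_{m})$. Invoking the inductive hypothesis $\cyy^{\partial\Delta^{m-2}} \simeq \Sigma^{m-2}(Y_{1} \wedge \cdots \wedge Y_{m-1})$ then gives $\Sigma^{m-1}(Y_{1} \wedge \cdots \wedge Y_{m})$, completing the induction.

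The main technical point to verify is that the deformation retractions on the two unwanted corners are compatible with the pushout structure, so that the strict pushout computes the homotopy pushout. This comes down to well-pointedness of the $Y_{i}$, which ensures that $Y_{i} \hookrightarrow CY_{i}$ and $\cyy^{\partial\Delta^{m-2}} \hookrightarrow \prod_{i=1}^{m-1} CY_{i}$ are cofibrations, so that the homotopy-equivalent replacements on the corners can be made without altering the homotopy type of the pushout.
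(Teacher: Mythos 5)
Your proof is correct. Note, however, that the paper does not prove Lemma~\ref{2egs}(b) at all: it disposes of part~(a) as immediate from the definition and for part~(b) simply cites Porter~\cite{P} (when each $Y_{i}$ is a loop space) and Grbi\'{c}--Theriault~\cite{GT2} in general. You instead supply a self-contained inductive argument. What is pleasant about your route is that it uses exactly the machinery the paper builds anyway: the star-link-deletion pushout (here $\linkK(m)=\partial\Delta^{m-2}$, $\overline{K}\backslash m=\Delta^{m-2}$, $\starrK(m)=\partial\Delta^{m-2}\ast\{m\}$), Proposition~\ref{uxaunion} to convert it to a pushout of polyhedral products, and the observation that killing contractible corners reduces the square to the projection pushout $Y_{m}\leftarrow\cyy^{\partial\Delta^{m-2}}\times Y_{m}\rightarrow\cyy^{\partial\Delta^{m-2}}$, whose homotopy pushout is the join. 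This is the same three-step pattern the authors use later in the proof of Proposition~\ref{retract} (their displays (\ref{slrpoly1})--(\ref{slrpoly3}) and (\ref{linkjoin})), so your proof makes the paper more self-contained without introducing new tools. Your cofibrancy remark is the right technical point: since $Y_{m}\hookrightarrow CY_{m}$ is a cofibration (hence so is $\cyy^{\partial\Delta^{m-2}}\times Y_{m}\hookrightarrow\cyy^{\partial\Delta^{m-2}}\times CY_{m}$), the strict colimit from Proposition~\ref{uxaunion} is already a homotopy pushout, and the contractible factors can be collapsed without changing its homotopy type. One small bookkeeping quibble: in Proposition~\ref{uxaunion}'s square the star occupies the upper-right corner and the deletion the lower-left, while you have them swapped; this is immaterial since the pushout is symmetric in its two legs, but it is worth matching the paper's conventions when splicing this in.
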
 
\vspace{-1mm}~$\qqed$\medskip

\section{Combinatorial ingredients} 
\label{sec:combinatorics} 

This section records the combinatorial information that will be needed. 
Let $K$ be a simplicial complex on the index set $[m]$. For a vertex $v\in K$, 
the \emph{star}, \emph{restriction} (or \emph{deletion}) and \emph{link} 
of $v$ are the subcomplexes
\[\begin{array}{rcl}
     \starr_{K}(v) & = & \{\tau\in K\mid \{v\}\cup\tau\in K\}; \\
     K\backslash v & =
          & \{\tau\in K\mid \{v\}\cap\tau=\emptyset\}; \\
     \link_{K}(v) & = & \starr_{K}(v)\cap K\backslash v.
   \end{array}\] 
The \emph{join} of two simplicial complexes $K_{1},K_{2}$ on disjoint 
index sets is the simplicial complex 
\[K_{1}\ast K_{2}=\{\sigma_{1}\cup \sigma_{2}\mid \sigma_{i}\in K_{i}\}.\] 
From the definitions, it follows that $\starr_{K}(v)$ is a join,  
\[\starr_{K}(v)=\{v\}\ast\link_{K}(v),\] 
and there is a pushout
\[\diagram
         \link_{K}(v)\rto\dto & \starr_{K}(v)\dto \\
         K\backslash v\rto & K.
  \enddiagram\] 

A \emph{face} of $K$ is a simplex of $K$. Let $\Delta^{m-1}$ be the standard 
$m$-simplex on the vertex set $[m]$ and note that $K$ is a subcomplex 
of $\Delta^{m-1}$. Recall from the Introduction that a face $\sigma\in\Delta^{m-1}$ 
is a \emph{missing face} of $K$ if $\sigma\notin K$. It is a 
\emph{minimal missing face} if any proper face of $\sigma$ is a face of $K$. 
Denote the set of minimal missing faces of $K$ by $MMF(K)$. For a simplex $\sigma$, 
let $\partial\sigma$ be its boundary. Observe that $\sigma\in MMF(K)$ if and 
only if $\sigma\notin K$ but $\partial\sigma\subseteq K$. 

There is a special case which will play a crucial role in what follows. 
Let $\overline{K}$ be a simplicial complex on the vertex set $[m]$ with the 
property that it has precisely two distinct minimal missing faces and these 
have non-empty intersection. That is, suppose that 
$MMF(\overline{K})=\{\sigma_{1},\sigma_{2}\}$ where $\sigma_{1}$ 
and $\sigma_{2}$ have vertex sets $I$ and $J$ respectively, satisfying  
$I\neq J$, $I\cup J=[m]$ and $I\cap J\neq\emptyset$. Let $w$ be a vertex 
in both $I$ and $J$. 

Consider the star-link-restriction pushout of $\overline{K}$ with respect to 
the vertex $w$:  
\begin{equation} 
  \label{slr} 
  \diagram
         \linkK(w)\rto\dto & \starrK(w)\dto \\
         \overline{K}\backslash w\rto & \overline{K}.
  \enddiagram 
\end{equation} 
Let $\overline{\sigma}_{1}$ and $\overline{\sigma}_{2}$ be the proper faces 
of $\sigma_{1}$ and $\sigma_{2}$ on the vertex sets 
$\overline{I}=I\backslash\{w\}$ and $\overline{J}=J\backslash\{w\}$ respectively. 
 
\begin{lemma} 
   \label{mfstar} 
   We have 
   $\overline{\sigma}_{1},\overline{\sigma}_{2}\in MMF(\starrK(w))$. 
\end{lemma}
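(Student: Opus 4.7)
The plan is to verify directly, from the definitions, the two conditions characterizing a minimal missing face, namely $\overline{\sigma}_i \notin \starrK(w)$ and $\partial\overline{\sigma}_i \subseteq \starrK(w)$. Since the hypotheses on $\sigma_1$ and $\sigma_2$ are symmetric, it suffices to carry out the argument for $\overline{\sigma}_1$. The key identity used throughout is the definition of the star: $\tau \in \starrK(w)$ if and only if $\tau \cup \{w\} \in \overline{K}$.

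For non-membership, because $w \in I$ and $\overline{I} = I \setminus \{w\}$, we have the tautological equality $\overline{\sigma}_1 \cup \{w\} = \sigma_1$. Since $\sigma_1$ is a minimal missing face of $\overline{K}$, in particular $\sigma_1 \notin \overline{K}$, which by the star characterization is precisely $\overline{\sigma}_1 \notin \starrK(w)$. For the boundary condition, let $\tau \subsetneq \overline{\sigma}_1$ be any proper face. Then $\tau \cup \{w\} \subsetneq \sigma_1$ is a proper face of $\sigma_1$, and minimality of $\sigma_1$ among missing faces forces every such face to lie in $\overline{K}$. Applying the star characterization in the other direction gives $\tau \in \starrK(w)$, so $\partial\overline{\sigma}_1 \subseteq \starrK(w)$ as required. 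The identical argument, with $I$ and $\sigma_1$ replaced by $J$ and $\sigma_2$, handles $\overline{\sigma}_2$.

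I do not anticipate a genuine obstacle here: the proof is just an unwinding of the definitions of star and of minimal missing face. The only point worth noting is a convention, namely that $\starrK(w)$ is regarded as a simplicial complex on the ambient vertex set $[m]$, so that $\overline{\sigma}_i$ is automatically a face of $\Delta^{m-1}$ and so makes sense as a potential element of $MMF(\starrK(w))$ even when some of its vertices are not neighbours of $w$ in $\overline{K}$; this matches the convention already in use elsewhere in the paper (for example in Proposition~\ref{uxaunion}).
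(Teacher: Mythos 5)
Your proof is correct, and it establishes the same two conditions (non-membership and boundary containment) as the paper does, but it does so more directly. The paper's argument for non-membership routes through the link: it supposes $\overline{\sigma}_1\in\starrK(w)$, deduces $\overline{\sigma}_1\in\overline{K}\backslash w$, hence $\overline{\sigma}_1\in\linkK(w)$, hence $\sigma_1=\overline{\sigma}_1\ast\{w\}\in\starrK(w)\subseteq\overline{K}$, a contradiction; and for minimality it uses the decomposition $\overline{K}=\starrK(w)\cup\overline{K}\backslash w$ from the star--link--restriction pushout~(\ref{slr}) to conclude that $\tau\ast\{w\}$ would be a missing face of $\overline{K}$. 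You instead isolate the single characterization $\tau\in\starrK(w)\iff\tau\cup\{w\}\in\overline{K}$ (for $\tau$ not containing $w$, which is automatic here), and both conditions fall out in one line each: $\overline{\sigma}_1\cup\{w\}=\sigma_1\notin\overline{K}$ gives non-membership, and every proper face $\tau\subsetneq\overline{\sigma}_1$ yields a proper face $\tau\cup\{w\}\subsetneq\sigma_1$, which lies in $\overline{K}$ by minimality of $\sigma_1$, so $\tau\in\starrK(w)$. The underlying combinatorial content is the same --- both approaches are ultimately exploiting the bijection between faces of $\starrK(w)$ and faces of $\overline{K}$ containing $w$ --- but your version is shorter, avoids contradiction, and does not need the pushout or the explicit appearance of $\linkK(w)$ and $\overline{K}\backslash w$. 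Your final remark about the ambient vertex set is well taken and consistent with the paper's conventions.
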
 

\begin{proof} 
Consider $\overline{\sigma}_{1}$, the argument for $\overline{\sigma}_{2}$ being 
similar. First we show that $\overline{\sigma}_{1}$ is a missing face of 
$\starrK(w)$. For if $\overline{\sigma}_{1}\in\starrK(w)$ 
then, as $w$ is not a vertex of $\overline{\sigma}_{1}$, we also have 
$\overline{\sigma}_{1}\in\overline{K}\backslash w$, implying that 
$\overline{\sigma}_{1}\in\linkK(w)= 
     \starrK(w)\cap\overline{K}\backslash w$. 
This in turn implies that 
$\overline{\sigma}_{1}\ast\{w\}\in\starrK(w)$. But 
$\overline{\sigma}_{1}\ast\{w\}=\sigma_{1}$, so 
$\sigma_{1}\in\starrK(w)$. Therefore, by~(\ref{slr}), 
$\sigma_{1}\in\overline{K}$, contradicting the fact that $\sigma_{1}$ is a 
missing face of $\overline{K}$. 

Next, we show that that $\overline{\sigma}_{1}$ 
is a minimal missing face of $\starrK(w)$. If not, then some 
proper face $\tau$ of $\overline{\sigma}_{1}$ is also a missing face 
of $\starrK(w)$. As $w$ is not a vertex of $\overline{\sigma}_{1}$, 
it is not a vertex of $\tau$ either. Therefore $\tau\ast\{w\}$ is a 
missing face of $\starrK(w)$. The presence of the vertex $w$ 
in $\tau\ast\{w\}$ implies that it is also not a face of~$\overline{K}\backslash w$. 
On the other hand, by~(\ref{slr}), $\overline{K}$ is the union of 
$\starrK(w)$ and $\overline{K}\backslash w$, so a 
face that is missing from both $\starrK(w)$ and 
$\overline{K}\backslash w$ must also be missing from $\overline{K}$. 
Therefore $\tau\ast\{w\}$ is a missing face of $\overline{K}$. But as 
$\tau$ is a proper face of $\overline{\sigma}_{1}$, $\tau\ast\{w\}$ is  
a proper face of $\overline{\sigma}_{1}\ast\{w\}=\sigma_{1}$, 
contradicting the fact that $\sigma_{1}$ is a minimial missing face of $\overline{K}$. 
\end{proof} 

\begin{corollary} 
   \label{mflink} 
   We have 
   $\partial\,\overline{\sigma}_{1},\partial\,\overline{\sigma}_{2}\subseteq 
         \linkK(w)$  
   and $\overline{\sigma}_{1},\overline{\sigma}_{2}\notin\linkK(w)$. 
\end{corollary}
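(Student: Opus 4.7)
The plan is to derive the corollary directly from Lemma~\ref{mfstar}, which has already done the substantive combinatorial work. The argument for $\overline{\sigma}_{2}$ is symmetric to that for $\overline{\sigma}_{1}$, so I will describe the steps only for $\overline{\sigma}_{1}$.

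First I would handle the negative statement $\overline{\sigma}_{1}\notin\linkK(w)$. By definition, $\linkK(w)=\starrK(w)\cap\overline{K}\backslash w$, so $\linkK(w)\subseteq\starrK(w)$. Lemma~\ref{mfstar} asserts $\overline{\sigma}_{1}\in MMF(\starrK(w))$, and in particular $\overline{\sigma}_{1}\notin\starrK(w)$, from which $\overline{\sigma}_{1}\notin\linkK(w)$ is immediate.

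Next I would establish $\partial\,\overline{\sigma}_{1}\subseteq\linkK(w)$ by exhibiting the boundary inside each factor of the intersection defining the link. Since $\overline{\sigma}_{1}$ is a \emph{minimal} missing face of $\starrK(w)$, every proper face of $\overline{\sigma}_{1}$ lies in $\starrK(w)$, i.e.\ $\partial\,\overline{\sigma}_{1}\subseteq\starrK(w)$. On the other hand, the vertex $w$ is not a vertex of $\overline{\sigma}_{1}$ (by construction $\overline{\sigma}_{1}$ has vertex set $\overline{I}=I\backslash\{w\}$), hence $w$ is not a vertex of any proper face of $\overline{\sigma}_{1}$ either; this gives $\partial\,\overline{\sigma}_{1}\subseteq\overline{K}\backslash w$ (using $\partial\,\overline{\sigma}_{1}\subseteq\overline{K}$, which follows from $\partial\,\overline{\sigma}_{1}\subseteq\starrK(w)\subseteq\overline{K}$). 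Intersecting the two inclusions yields $\partial\,\overline{\sigma}_{1}\subseteq\starrK(w)\cap\overline{K}\backslash w=\linkK(w)$.

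There is no real obstacle here: once Lemma~\ref{mfstar} is in hand, both claims reduce to unwinding the definition of link together with the observation that $w\notin\overline{\sigma}_{1}$. The only point demanding any care is the need to confirm that the proper faces of $\overline{\sigma}_{1}$ actually lie in $\overline{K}$ (not merely in $\starrK(w)$, which would already suffice since $\starrK(w)\subseteq\overline{K}$), so that their inclusion in $\overline{K}\backslash w$ makes sense.
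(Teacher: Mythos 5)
Your proposal is correct and follows essentially the same route as the paper: deduce $\partial\,\overline{\sigma}_{1},\partial\,\overline{\sigma}_{2}\subseteq\starrK(w)$ from Lemma~\ref{mfstar}, note that $w$ is absent from these boundaries so they also lie in $\overline{K}\backslash w$, and intersect; the negative statement likewise follows from $\linkK(w)\subseteq\starrK(w)$ and Lemma~\ref{mfstar}. Your extra remark that the boundaries lie in $\overline{K}$ via $\starrK(w)\subseteq\overline{K}$ is a correct, minor point of care that the paper leaves implicit.
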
 

\begin{proof} 
Recall that a face $\sigma$ of a simplicial complex $K$ is a minimal missing 
face if and only if $\sigma\notin K$ but $\partial\sigma\subseteq K$. So by 
Lemma~\ref{mfstar}, 
$\partial\,\overline{\sigma}_{1},\partial\,\overline{\sigma}_{2} 
     \subseteq\starrK(w)$. 
By definition, neither $\overline{\sigma}_{1}$ nor $\overline{\sigma}_{2}$ 
have $w$ in their vertex sets, so neither do their boundaries. Therefore 
$\partial\,\overline{\sigma}_{1},\partial\,\overline{\sigma}_{2} 
     \subseteq\overline{K}\backslash w$. 
Therefore, as 
$\linkK(w)=\starrK(w)\cap\overline{K}\backslash w$, 
we have 
$\partial\,\overline{\sigma}_{1},\partial\,\overline{\sigma}_{2}\subseteq 
         \linkK(w)$. 

Also, as 
$\linkK(w)=\starrK(w)\cap\overline{K}\backslash w$, 
it cannot be that $\overline{\sigma}_{1},\overline{\sigma}_{2}$ are in 
$\linkK(w)$ as that would imply they are also 
in $\starrK(w)$, contradicting Lemma~\ref{mfstar}. 
\end{proof} 

One further observation we need regarding $\overline{K}$ is the following. 
Regarding $w$ as the $m^{th}$-vertex of $\overline{K}$, observe that 
$\overline{K}\backslash w$ is a simplicial complex on the vertex set $[m-1]$. 

\begin{lemma} 
   \label{Krsimplex} 
   There is an isomorphism of simplicial complexes 
   $\overline{K}\backslash w\cong\Delta^{m-2}$.  
\end{lemma}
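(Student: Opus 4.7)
The plan is very direct. To identify $\overline{K}\backslash w$ with the full simplex on the vertex set $[m]\backslash\{w\}$, it suffices to show that every subset $\tau\subseteq [m]\backslash\{w\}$ is a face of $\overline{K}\backslash w$. Since such a $\tau$ automatically avoids the vertex $w$, this reduces to showing that every subset of $[m]\backslash\{w\}$ is a face of $\overline{K}$ itself.

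The key combinatorial fact to invoke is the standard characterization of faces in terms of minimal missing faces: a subset $\tau\subseteq[m]$ fails to be a face of $\overline{K}$ if and only if it contains some minimal missing face of $\overline{K}$. By hypothesis the only minimal missing faces of $\overline{K}$ are $\sigma_{1}$ and $\sigma_{2}$, with vertex sets $I$ and $J$, and by the choice of $w$ we have $w\in I\cap J$, so both $\sigma_{1}$ and $\sigma_{2}$ contain the vertex $w$. Consequently no subset of $[m]\backslash\{w\}$ can contain either $\sigma_{1}$ or $\sigma_{2}$, so every such subset is a face of $\overline{K}$.

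Putting the two steps together, every subset of $[m]\backslash\{w\}$ lies in $\overline{K}\backslash w$, and this exhibits $\overline{K}\backslash w$ as the full simplex on the $(m-1)$-element vertex set $[m]\backslash\{w\}$, i.e.\ $\overline{K}\backslash w\cong\Delta^{m-2}$. There is no real obstacle here; the only thing to double-check is the \emph{minimal missing face characterization} of faces, which is immediate from the definition (a missing face containing a minimal missing face is again missing, and conversely any minimal inclusion-minimal missing face is by definition minimal). The hypothesis $w\in I\cap J$, which comes from the assumption that the two minimal missing faces of $\overline{K}$ have nonempty intersection, is used precisely once, but it is the whole content of the lemma.
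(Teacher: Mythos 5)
Your argument is correct and is essentially the same as the paper's proof: both reduce to showing $\overline{K}\backslash w$ has no missing faces, appeal to the fact that any missing face of $\overline{K}$ must contain one of the minimal missing faces $\sigma_1,\sigma_2$, and conclude by observing that $w$ lies in both $\sigma_1$ and $\sigma_2$ while no subset of $[m]\backslash\{w\}$ contains $w$. The paper merely phrases it contrapositively (suppose there is a missing face) while you phrase it directly (every subset is a face); the content is identical.
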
 

\begin{proof} 
 It is equivalent to show that $\overline{K}\backslash w$ has 
no missing faces. Suppose that $\sigma\in\Delta^{m-2}$ is a 
missing face of $\overline{K}\backslash w$. Then as 
$\overline{K}\backslash w$ is the restriction of $\overline{K}$ 
to the vertex set $[m-1]$, $\sigma$ is also a missing face of $\overline{K}$. 
On the other hand, as $MMF(\overline{K})=\{\sigma_{1},\sigma_{2}\}$, 
any missing face of $\overline{K}$ must have either $\sigma_{1}$ or $\sigma_{2}$ 
as a subface. Thus $\sigma$ must have either $\sigma_{1}$ or $\sigma_{2}$ 
as a subface. But this cannot happen since~$w$ is not in the vertex set 
of $\sigma$ but it is in the vertex sets of both $\sigma_{1}$ and $\sigma_{2}$. 
\end{proof}

\section{Moore's conjecture} 
\label{sec:moore} 

In this section we prove Theorems~\ref{intromacmoore} and~\ref{introellhyp} as 
consequences of Theorem~\ref{moore}. 

\begin{proposition} 
   \label{retract} 
   Let $K$ be a simplicial complex on the vertex set $[m]$ and let $X_{1},\ldots,X_{m}$ 
   be any sequence of pointed, path-connected $CW$-pairs. Suppose that 
   $\sigma_{1},\sigma_{2}\in MMF(K)$ and let $I$ and $J$ be the vertex 
   sets of $\sigma_{1}$ and $\sigma_{2}$ respectively. If $I\neq J$, $I\cup J=[m]$ 
   and $I\cap J\neq\emptyset$, then 
   $\cxx^{\partial\sigma_{1}}\vee\cxx^{\partial\sigma_{2}}$ 
   is a retract of $\cxx^{K}$. 
\end{proposition}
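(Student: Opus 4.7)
The plan has two main parts: first reduce to the case in which $\sigma_{1}$ and $\sigma_{2}$ are the only minimal missing faces of $K$, then handle that case by induction on $|I\cap J|$.

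For the reduction, let $\overline{K}$ be the simplicial complex on $[m]$ consisting of those $\tau$ with $\sigma_{1}\not\subseteq\tau$ and $\sigma_{2}\not\subseteq\tau$. A direct check shows $\overline{K}$ is a simplicial complex with $MMF(\overline{K})=\{\sigma_{1},\sigma_{2}\}$ and $K\subseteq\overline{K}$, so $\cxx^{K}\subseteq\cxx^{\overline{K}}$. The natural map $\cxx^{\partial\sigma_{1}}\vee\cxx^{\partial\sigma_{2}}\to\cxx^{\overline{K}}$ factors through $\cxx^{K}$ (since $\partial\sigma_{k}\subseteq K$), so any retraction of the wedge off $\cxx^{\overline{K}}$ restricts to one off $\cxx^{K}$. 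It therefore suffices to prove the proposition for $\overline{K}$.

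For $\overline{K}$, pick $w\in I\cap J$ and use the star--link--restriction pushout~(\ref{slr}) together with Proposition~\ref{uxaunion}. By Lemma~\ref{Krsimplex}, $\overline{K}\backslash w$ is a full simplex on $[m]\backslash\{w\}$, so $\cxx^{\overline{K}\backslash w}$ is homotopy equivalent to $X_{w}$ (its remaining factors are contractible cones). Because $\starrK(w)=\{w\}\ast\linkK(w)$, the polyhedral product $\cxx^{\starrK(w)}$ contracts onto $L:=\cxx^{\linkK(w)}$ on the vertex set $[m]\backslash\{w\}$. The maps in~(\ref{slr}) are cofibrations, so the associated pushout of polyhedral products is a homotopy pushout, and the two simplifications reduce it to the standard presentation of the join, yielding $\cxx^{\overline{K}}\simeq X_{w}\ast L$. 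An identical analysis applied to $\partial\sigma_{k}$ (whose decomposition at $w$ has link $\partial\overline{\sigma}_{k}$ and deletion $\overline{\sigma}_{k}$, a simplex) yields $\cxx^{\partial\sigma_{k}}\simeq X_{w}\ast\cxx^{\partial\overline{\sigma}_{k}}$, where $\overline{\sigma}_{k}=\sigma_{k}\backslash\{w\}$. By Corollary~\ref{mflink}, the inclusion $\cxx^{\partial\overline{\sigma}_{k}}\hookrightarrow L$ is well-defined and is compatible with $\cxx^{\partial\sigma_{k}}\hookrightarrow\cxx^{\overline{K}}$ under these identifications.

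Now induct on $|I\cap J|$. For the inductive step ($|I\cap J|\geq 2$), Lemma~\ref{mfstar} and Corollary~\ref{mflink} show that $\linkK(w)$ satisfies the hypotheses of the proposition with respect to $\overline{\sigma}_{1},\overline{\sigma}_{2}$, where $|\overline{I}\cap\overline{J}|=|I\cap J|-1\geq 1$, $\overline{I}\cup\overline{J}=[m]\backslash\{w\}$, and $\overline{I}\neq\overline{J}$. The inductive hypothesis therefore produces a retraction $r\colon L\to\cxx^{\partial\overline{\sigma}_{1}}\vee\cxx^{\partial\overline{\sigma}_{2}}$. Taking the join with $X_{w}$ and using $X_{w}\ast(A\vee B)\simeq(X_{w}\ast A)\vee(X_{w}\ast B)$ gives
\[\cxx^{\overline{K}}\simeq X_{w}\ast L\xrightarrow{X_{w}\ast r}(X_{w}\ast\cxx^{\partial\overline{\sigma}_{1}})\vee(X_{w}\ast\cxx^{\partial\overline{\sigma}_{2}})\simeq\cxx^{\partial\sigma_{1}}\vee\cxx^{\partial\sigma_{2}},\]
which by naturality retracts the natural inclusion of the wedge. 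In the base case $|I\cap J|=1$, the two minimal missing faces $\overline{\sigma}_{1},\overline{\sigma}_{2}$ of $\linkK(w)$ are disjoint and cover $[m]\backslash\{w\}$, forcing $\linkK(w)=\partial\overline{\sigma}_{1}\ast\partial\overline{\sigma}_{2}$ and hence $L\simeq A\times B$ with $A=\cxx^{\partial\overline{\sigma}_{1}}$ and $B=\cxx^{\partial\overline{\sigma}_{2}}$. Combining $X_{w}\ast(A\times B)\simeq\Sigma X_{w}\wedge(A\times B)$ with the James splitting $\Sigma(A\times B)\simeq\Sigma A\vee\Sigma B\vee\Sigma(A\wedge B)$ exhibits $\cxx^{\overline{K}}\simeq(X_{w}\ast A)\vee(X_{w}\ast B)\vee(X_{w}\ast(A\wedge B))$, from which the desired wedge summand projects off.

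The main technical challenge is ensuring that the various homotopy equivalences are natural with respect to the inclusion $\partial\sigma_{k}\subseteq\overline{K}$, so that the constructed retraction genuinely splits the natural map from the wedge into $\cxx^{\overline{K}}$ rather than merely exhibiting an abstractly equivalent wedge summand. This requires tracking a map of pushout diagrams, induced by $\partial\sigma_{k}\hookrightarrow\overline{K}$ and supported at each stage by Corollary~\ref{mflink}, through the successive simplifications---most delicately in the base case, where the James splitting relies on a specific choice of co-$H$ structure that must be matched with the pushout identifications.
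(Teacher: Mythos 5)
Your proof is correct in substance and shares the same opening moves as the paper: reduce to $\overline{K}$ with $MMF(\overline{K})=\{\sigma_{1},\sigma_{2}\}$, pick $w\in I\cap J$, apply Proposition~\ref{uxaunion} to the star--link--restriction pushout, and use Lemma~\ref{Krsimplex} to collapse the diagram to the join presentation $\cxx^{\overline{K}}\simeq X_{w}\ast\cxx^{\linkK(w)}$, with the analogous identification $\cxx^{\partial\sigma_{k}}\simeq X_{w}\ast\cxx^{\partial\overline{\sigma}_{k}}$ obtained naturally by running the same pushout for $\partial\sigma_{k}$. Where you diverge is the final step. The paper observes that $\partial\,\overline{\sigma}_{1}$ and $\partial\,\overline{\sigma}_{2}$ are full subcomplexes of $\linkK(w)$ on different index sets $\overline{I}\neq\overline{J}$, and then invokes the Bahri--Bendersky--Cohen--Gitler stable splitting [BBCG1, Theorem~2.21] to retract $\Sigma\cxx^{\partial\,\overline{\sigma}_{1}}\vee\Sigma\cxx^{\partial\,\overline{\sigma}_{2}}$ off $\Sigma\cxx^{\linkK(w)}$ in one stroke; smashing with $X_{w}$ and matching with the natural inclusions finishes. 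You instead run an induction on $|I\cap J|$: the inductive step applies the proposition itself to $\linkK(w)$ (justified by Lemma~\ref{mfstar} and Corollary~\ref{mflink}, after noting that $MMF(\linkK(\overline{K}))=\{\overline{\sigma}_{1},\overline{\sigma}_{2}\}$ when $MMF(\overline{K})=\{\sigma_{1},\sigma_{2}\}$, and that $\overline{I}\neq\overline{J}$, $\overline{I}\cup\overline{J}=[m]\backslash\{w\}$), and the base case $|I\cap J|=1$ forces $\linkK(w)=\partial\,\overline{\sigma}_{1}\ast\partial\,\overline{\sigma}_{2}$, so $\cxx^{\linkK(w)}$ is a product $A\times B$, and the suspension splitting $\Sigma(A\times B)\simeq\Sigma A\vee\Sigma B\vee\Sigma(A\wedge B)$ supplies the retraction. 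Both approaches need the same naturality bookkeeping (the inclusion of $\cxx^{\partial\sigma_{k}}$ must correspond, through the join identifications, to the map induced by $\partial\,\overline{\sigma}_{k}\hookrightarrow\linkK(w)$), and you flag this correctly. The trade-off: your route is self-contained, avoiding [BBCG1, Theorem~2.21] as a black box, at the cost of threading naturality through the induction layers; the paper's single application of BBCG is shorter and requires only one naturality check. One small checkpoint you should make explicit in a full write-up: since $\sigma_{1}\not\subseteq\sigma_{2}$ and $\sigma_{2}\not\subseteq\sigma_{1}$ (a minimal missing face cannot properly contain another), both $|I|,|J|\geq 2$, so $\overline{\sigma}_{1},\overline{\sigma}_{2}$ are genuine (possibly zero-dimensional) simplices and the degenerate cases of the join and James splitting still behave as claimed.
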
 

\begin{proof} 
A new simplicial complex is introduced that will act as an intermediary. 
In general, a simplicial complex may be characterized by listing its minimal 
missing faces. Let $\overline{K}$ be the simplicial complex 
on the vertex set $[m]$ that is characterized by the condition that 
$MMF(\overline{K})=\{\sigma_{1},\sigma_{2}\}$. Intuitively, $\overline{K}$ is 
obtained from $K$ by filling in all missing faces that do not have either $\sigma_{1}$ 
or $\sigma_{2}$ as a subface. Rigorously, there is a map of simplicial complexes 
\(\namedright{K}{}{\overline{K}}\) 
that induces a map of polyhedral products 
\(\namedright{\cxx^{K}}{}{\cxx^{\overline{K}}}\).  
Since $\sigma_{1},\sigma_{2}$ are minimal missing faces of $K$, 
we have $\sigma_{1},\sigma_{2}\notin K$ but 
$\partial\sigma_{1},\partial\sigma_{2}\subseteq K$. The inclusion 
\(\namedright{\partial\sigma_{1}}{}{K}\) 
is a map of simplicial complexes and it induces a map of polyhedral products 
\(\namedright{\cxx^{\partial\sigma_{1}}}{}{\cxx^{K}}\). 
There is a similar map with respect to $\partial\sigma_{2}$. We will show that 
the composite 
\(\nameddright{\cxx^{\partial\sigma_{1}}\vee\cxx^{\partial\sigma_{2}}} 
      {}{\cxx^{K}}{}{\cxx^{\overline{K}}}\) 
has a left homotopy inverse. Note that this composite of polyhedral products 
is the same as the one induced by the inclusions 
\(\namedright{\partial\sigma_{1}}{}{\overline{K}}\) 
and 
\(\namedright{\partial\sigma_{2}}{}{\overline{K}}\), 
so it suffices to show that the map 
\(\namedright{\cxx^{\partial\sigma_{1}}\vee\cxx^{\partial\sigma_{2}}} 
      {}{\cxx^{\overline{K}}}\) 
has a left homotopy inverse.

The conditions on the vertex sets $I$ and $J$ imply that $\overline{K}$ has 
the same form as in Section~\ref{sec:combinatorics}. Relabelling the spaces 
$X_{1},\ldots,X_{m}$ if necessary, we may suppose that the intersection 
vertex $w$ corresponds to the $m^{th}$-coordinate space $X_{m}$. 
By Proposition~\ref{uxaunion}, the pushout of simplicial complexes 
in~(\ref{slr}) implies that there is a pushout of polyhedral products 
\begin{equation} 
  \label{slrpoly1} 
  \diagram
         \cxx^{\linkK(w)^{\circ}}\rto^-{g^{\circ}}\dto^{f^{\circ}} 
                 & \cxx^{\starrK(w)^{\circ}}\dto \\
         \cxx^{\overline{K}\backslash w^{\circ}}\rto & \cxx^{\overline{K}}.
  \enddiagram 
\end{equation} 
where $\linkK(w)^{\circ}$, $\starrK(w)^{\circ}$ 
and $\overline{K}\backslash w^{\circ}$ are $\linkK(w)$, 
$\starrK(w)$ and $\overline{K}\backslash w$ regarded 
as having vertex set $[m]$, and the maps $f^{\circ}$ and $g^{\circ}$ are 
induced by the inclusions 
\(\namedright{\linkK(w)^{\circ}}{}{\overline{K}\backslash w^{\circ}}\) 
and 
\(\namedright{\linkK(w)^{\circ}}{}{\starrK(w)^{\circ}}\) 
respectively. The vertex sets of $\linkK(w)$ and 
$\overline{K}\backslash w$ are both $[m-1]$, so by the definition of the 
polyhedral product, 
\[\cxx^{\linkK(w)^{\circ}}=\cxx^{\linkK(w)}\times X_{m} 
      \qquad\cxx^{\overline{K}\backslash w^{\circ}}= 
      \cxx^{\overline{K}\backslash w}\times X_{m}\] 
and $f^{\circ}=f\times 1$ where $f$ is induced by the inclusion 
\(\namedright{\linkK(w)}{}{\overline{K}\backslash w}\) 
and $1$ is the identity map on $X_{m}$. On the other hand, the vertex set of 
$\starrK(w)$ is $[m]$ so 
$\starrK(w)^{\circ}=\starrK(w)$. 
Since $\starrK(w)=\linkK(w)\ast\{w\}$, 
the definition of the polyhedral product implies that
\[\cxx^{\starrK(w)^{\circ}}=\cxx^{\linkK(w)}\times CX_{m}\] 
and $g^{\circ}=1\times i_{m}$ where~$1$ is the identity map on 
$\cxx^{\linkK(w)}$ and 
\(i_{m}\colon\namedright{X_{m}}{}{CX_{m}}\) 
is the inclusion of the base of the cone. Putting all this together, the 
pushout~(\ref{slrpoly1}) becomes the pushout 
\begin{equation} 
  \label{slrpoly2} 
  \diagram 
       \cxx^{\linkK(w)}\times X_{m}\rto^-{1\times i_{m}}\dto^{f\circ 1} 
           & \cxx^{\linkK(w)}\times CX_{m}\dto \\ 
       \cxx^{\overline{K}\backslash w}\times X_{m}\rto & \cxx^{\overline{K}}. 
  \enddiagram 
\end{equation}  
By Lemma~\ref{Krsimplex}, $\overline{K}\backslash w\cong\Delta^{m-2}$, 
so by Lemma~\ref{2egs}~(a), $\cxx^{\overline{K}\backslash w}=\prod_{i=1}^{m-1} CX_{i}$. 
Therefore, in~(\ref{slrpoly2}), both $\cxx^{\overline{K}\backslash w}$ and $CX_{m}$ 
are contractible, implying that~(\ref{slrpoly2}) is equivalent, up to homotopy, 
to the homotopy pushout 
\begin{equation} 
  \label{slrpoly3} 
  \diagram 
       \cxx^{\linkK(w)}\times X_{m}\rto^-{\pi_{1}}\dto^{\pi_{2}} 
           & \cxx^{\linkK(w)}\dto \\ 
       X_{m}\rto & \cxx^{\overline{K}}  
  \enddiagram 
\end{equation}          
where $\pi_{1}$ and $\pi_{2}$ are the projections onto the first and second 
factors respectively. It is well known that the pushout of the projections 
\(\namedright{A\times B}{}{A}\) 
and 
\(\namedright{A\times B}{}{B}\) 
is homotopy equivalent to the join of $A$ and $B$, which in turn is 
homotopy equivalent to $\Sigma A\wedge B$. So~(\ref{slrpoly3}) 
implies that there is a homotopy equivalence   
\begin{equation} 
  \label{linkjoin} 
  \cxx^{\overline{K}}\simeq\Sigma\cxx^{\linkK(w)}\wedge X_{m}. 
\end{equation}  

Now consider the minimal missing faces $\sigma_{1}$ and $\sigma_{2}$ 
of $\overline{K}$. As in Section~\ref{sec:combinatorics}, let 
$\overline{\sigma}_{1},\overline{\sigma}_{2}$ be the restrictions of 
$\sigma_{1},\sigma_{2}$ respectively to the vertex sets 
$\overline{I}=I\backslash\{w\}, \overline{J}=J\backslash\{w\}$. Note that as 
$I\neq J$ we also have $\overline{I}\neq\overline{J}$. By Corollary~\ref{mflink},  
$\overline{\sigma}_{1},\overline{\sigma}_{2}\notin\linkK(w)$ but 
$\partial\,\overline{\sigma}_{1},\partial\,\overline{\sigma}_{2}\subseteq 
      \linkK(w)$. 
Therefore, the full subcomplex of $\linkK(w)$ on $\overline{I}$ 
is $\partial\,\overline{\sigma}_{1}$, and the full subcomplex of 
$\linkK(w)$ on $\overline{J}$ is 
$\partial\,\overline{\sigma}_{2}$. By Proposition~\ref{fullsubcomplex}, 
this implies that $\cxx^{\partial\overline{\sigma}_{1}}$ and 
$\cxx^{\partial\overline{\sigma}_{2}}$ are retracts of 
$\cxx^{\linkK(w)}$. By~\cite[Theorem 2.21]{BBCG1}, the fact that 
$\partial\,\overline{\sigma}_{1}$ and $\partial\,\overline{\sigma}_{2}$ 
are full subcomplexes of $\linkK(w)$ on different index 
sets implies that that 
$\Sigma\cxx^{\partial\,\overline{\sigma}_{1}}\vee\Sigma\cxx^{\partial\,\overline{\sigma}_{2}}$ 
is a retract of $\Sigma\cxx^{\linkK(w)}$. Thus~(\ref{linkjoin}) 
implies that 
$(\Sigma\cxx^{\partial\,\overline{\sigma}_{1}}\wedge X_{m})\vee 
      (\Sigma\cxx^{\partial\,\overline{\sigma}_{2}}\wedge X_{m})$ 
is a retract of $\cxx^{\overline{K}}$. 

We wish to choose the retraction more carefully. Restrict $\overline{K}$ to the 
full subcomplex on the vertex set $I$. Then 
$MMF(\overline{K}_{I})=\{\sigma_{1}\}$, so $\overline{K}_{I}=\partial\sigma_{1}$. 
Therefore the star-link-restriction pushout for $\overline{K}_{I}$ with respect 
to the vertex $w$ becomes  
\[\diagram 
        \partial\,\overline{\sigma}_{1}\rto\dto 
              & \partial\,\overline{\sigma}_{1}\ast\{w\}\dto \\ 
        \partial\sigma_{1}\backslash w\rto & \partial\sigma_{1}.  
  \enddiagram\] 
Note that $\partial\sigma_{1}\backslash w$ is the simplex $\Delta^{k-1}$ 
on the vertex set $\{i_{1},\ldots,i_{k}\}$. Now arguing as for~(\ref{slrpoly1}) -- 
(\ref{slrpoly3}) and equation~(\ref{linkjoin}), we obtain in place of~(\ref{linkjoin}) 
a homotopy equivalence 
$\cxx^{\overline{K}_{I}}=\cxx^{\partial\sigma_{1}}\simeq 
      \Sigma\cxx^{\partial\,\overline{\sigma}_{1}}\wedge X_{m}$. 
Thus we may choose the map 
\(\namedright{\Sigma\cxx^{\partial\,\overline{\sigma}_{1}}\wedge X_{m}} 
      {}{\cxx^{\overline{K}}}\) 
as the composite 
\(\nameddright{\Sigma\cxx^{\partial\,\overline{\sigma}_{1}}\wedge X_{m}}{\simeq} 
      {\cxx^{\partial\sigma_{1}}}{}{\cxx^{\overline{K}}}\). 
Doing the same for $\partial\sigma_{2}$ we obtain a composite 
\(\nameddright{(\Sigma\cxx^{\partial\,\overline{\sigma}_{1}}\wedge X_{m})\vee 
      (\Sigma\cxx^{\partial\,\overline{\sigma}_{2}}\wedge X_{m})}{\simeq}     
      {\cxx^{\partial\sigma_{1}}\vee\cxx^{\partial\sigma_{2}}}{}{\cxx^{\overline{K}}}\), 
and it is this composite that has a left homotopy inverse. In particular, we have 
produced a left homotopy inverse for the map 
\(\namedright{\cxx^{\partial\sigma_{1}}\vee\cxx^{\partial\sigma_{2}}}{} 
      {\cxx^{\overline{K}}}\) 
induced by the inclusions 
\(\namedright{\partial\sigma_{1}}{}{\overline{K}}\) 
and 
\(\namedright{\partial\sigma_{2}}{}{\overline{K}}\), 
as required.  
\end{proof} 

Recall that for $1\leq i\leq m$, $Y_{i}$ is the homotopy fibre of the inclusion 
\(\namedright{A_{i}}{}{X_{i}}\). 

\begin{theorem} 
   \label{moore} 
   Let $K$ be a simplicial complex on the vertex set $[m]$ and let $\uxa$ 
   be any sequence of pointed, path-connected $CW$-pairs. The following hold: 
   \begin{letterlist} 
      \item if $MMF(K)=\{\sigma_{1},\ldots,\sigma_{n}\}$ and these minimal 
                missing faces are mutually disjoint, then there is a homotopy 
                equivalence 
                \[\Omega\uxa^{K}\simeq(\prod_{i=1}^{m}\Omega X_{i})\times  
                         (\prod_{j=1}^{n}\Omega\cyy^{\partial\sigma_{j}});\] 
      \item if $\sigma_{1}$ and $\sigma_{2}$ are minimal missing faces of $K$ 
                with nontrivial intersection then 
                $\Omega\left(\cyy^{\partial\sigma_{1}}\vee\cyy^{\partial\sigma_{2}}\right)$ 
                retracts off $\Omega\uxa^{K}$. 
   \end{letterlist} 
\end{theorem}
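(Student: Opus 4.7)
The starting observation is Theorem~\ref{cxxreduction}, which provides a homotopy equivalence
$\Omega\uxa^{K}\simeq(\prod_{i=1}^{m}\Omega X_{i})\times\Omega\cyy^{K}$.
Consequently, part~(a) reduces to showing that $\Omega\cyy^{K}\simeq\prod_{j=1}^{n}\Omega\cyy^{\partial\sigma_{j}}$, and part~(b) reduces to showing that $\Omega(\cyy^{\partial\sigma_{1}}\vee\cyy^{\partial\sigma_{2}})$ retracts off $\Omega\cyy^{K}$.

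For part~(a), the key combinatorial step is to verify that if $MMF(K)=\{\sigma_{1},\ldots,\sigma_{n}\}$ have pairwise disjoint vertex sets $I_{1},\ldots,I_{n}$ and $I_{0}=[m]\setminus\bigcup_{j=1}^{n}I_{j}$, then $K$ decomposes as the join
\[K \;\cong\; \Delta_{I_{0}}\ast\partial\sigma_{1}\ast\cdots\ast\partial\sigma_{n},\]
where $\Delta_{I_{0}}$ is the full simplex on $I_{0}$. The check is direct: a face $\tau\subseteq[m]$ lies in $K$ iff $\tau$ contains no $\sigma_{j}$, iff $\tau\cap I_{j}\subsetneq I_{j}$ for every $j\geq 1$, which is exactly the join condition. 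Now I would use the standard identity that polyhedral products convert joins on disjoint vertex sets into products, combined with the contractibility of $\cyy^{\Delta_{I_{0}}}=\prod_{i\in I_{0}}CY_{i}$ given by Lemma~\ref{2egs}(a), to conclude $\cyy^{K}\simeq\prod_{j=1}^{n}\cyy^{\partial\sigma_{j}}$. Looping and feeding this back into Theorem~\ref{cxxreduction} yields part~(a).

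For part~(b), let $I_{0}=I\cup J$, where $I,J$ are the vertex sets of $\sigma_{1},\sigma_{2}$. By Proposition~\ref{fullsubcomplex}, $\cyy^{K_{I_{0}}}$ retracts off $\cyy^{K}$. A short verification shows that $\sigma_{1}$ and $\sigma_{2}$ remain minimal missing faces of $K_{I_{0}}$: since $K_{I_{0}}$ consists of exactly those faces of $K$ whose vertices lie in $I_{0}$, and both $\sigma_{i}$ together with all their proper subfaces have vertex sets inside $I_{0}$, the minimality hypothesis on $\sigma_{i}$ in $K$ transfers verbatim to $K_{I_{0}}$. Now I would apply Proposition~\ref{retract} to $K_{I_{0}}$, viewed as a simplicial complex on $I_{0}$, with input sequence $Y_{i}$ for $i\in I_{0}$: the hypotheses $I\neq J$, $I\cup J=I_{0}$, $I\cap J\neq\emptyset$ all hold, so $\cyy^{\partial\sigma_{1}}\vee\cyy^{\partial\sigma_{2}}$ retracts off $\cyy^{K_{I_{0}}}$. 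Composing the two retractions and looping completes the argument.

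The heart of the proof is the join decomposition in part~(a); once it is in place, part~(b) is pure bookkeeping over Propositions~\ref{fullsubcomplex} and~\ref{retract} together with Theorem~\ref{cxxreduction}. I do not anticipate any serious obstacle beyond checking the combinatorial identifications carefully.
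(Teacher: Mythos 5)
Your proposal is correct and follows essentially the same route as the paper: both parts reduce to $\cyy^{K}$ via Theorem~\ref{cxxreduction}, with part~(a) using the join decomposition $K\cong\Delta_{I_{0}}\ast\partial\sigma_{1}\ast\cdots\ast\partial\sigma_{n}$ (which the paper cites from the literature while you verify directly) and part~(b) combining Proposition~\ref{fullsubcomplex} with Proposition~\ref{retract} applied to $K_{I\cup J}$. Your explicit check that $\sigma_{1},\sigma_{2}$ remain minimal missing faces of the full subcomplex is a small but worthwhile detail that the paper leaves implicit.
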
 
 
\begin{proof} 
By Theorem~\ref{cyysplit}, there is a homotopy equivalence 
\begin{equation} 
  \label{uxadecomp} 
  \Omega\uxa^{K}\simeq(\prod_{i=1}^{m}\Omega X_{i})\times\Omega\cyy^{K} 
\end{equation}  
where, for $1\leq i\leq m$, $Y_{i}$ is the homotopy fibre of the inclusion 
\(\namedright{A_{i}}{}{X_{i}}\). 

If all of the minimal missing faces of $K$ are mutually disjoint then 
there is a simplicial isomorphism 
$K\cong K_{0}\ast K_{1}\ast\cdots\ast K_{n}$ where $K_{0}$ is a product of simplices  
and, for $1\leq j\leq n$, $K_{j}=\partial\sigma_{j}$ (a proof of this may 
be found in~\cite{BBCG2}, although it may be more commonly known). 
In general, the definition of a polyhedral product implies that there is a homeomorphism 
$\uxa^{L\ast M}\cong\uxa^{L}\times\uxa^{M}$. In our case, as $K_{0}$ 
is a simplex, Lemma~\ref{2egs}~(a) implies that $\cyy^{K_{0}}$ is a 
product of cones and so is contractible. Thus 
\[\cyy^{K}\simeq\cyy^{K_{1}}\times\cdots\times\cyy^{K_{n}}= 
       \cyy^{\partial\sigma_{1}}\times\cdots\times\cyy^{\partial\sigma_{n}}.\] 
Combining this with~(\ref{uxadecomp}), the homotopy decomposition in 
part~(a) follows. 

Next, suppose that $\sigma_{1}$ and $\sigma_{2}$ are minimal missing faces 
of $K$ that intersect nontrivially. Let $I$ and~$J$ be the vertex sets of $\sigma_{1}$ 
and $\sigma_{2}$ respectively. Let $K_{I\cup J}$ be the full subcomplex of $K$ 
on the index set $I\cup J$. By Proposition~\ref{fullsubcomplex}, 
$\cyy^{K_{I\cup J}}$ is a retract of $\cyy^{K}$. Further, Proposition~\ref{retract} 
implies that $\cyy^{\partial\sigma_{1}}\vee\cyy^{\partial\sigma_{2}}$ is a 
retract of $\cyy^{K_{I\cup J}}$. Hence 
$\cyy^{\partial\sigma_{1}}\vee\cyy^{\partial\sigma_{2}}$ is a 
retract of $\cyy^{K}$. Combining this with~(\ref{uxadecomp}), the assertion 
in part~(b) follows. 
\end{proof} 

We now turn to Moore's Conjecture and the distinguishing of elliptic and 
hyperbolic spaces. For Theorem~\ref{intromacmoore}, we assume that 
each pair $(X_{i},A_{i})$ is $(D^{n_{i}},S^{n_{i}-1})$ for $n_{i}\geq 2$. Note that 
the homotopy fibre $Y_{i}$ of the inclusion 
\(\namedright{S^{n_{i}-1}}{}{D^{n_{i}}}\) 
is also $S^{n_{i}-1}$, so the pair $(CY_{i},Y_{i})$ in Theorem~\ref{moore} is also homotopy 
equivalent to $(D^{n_{i}},S^{n_{i}-1})$. Note that as each~$X_{i}$ is $D^{n_{i}}$, the term 
$\prod_{i=1}^{m}\Omega X_{i}$ in Theorem~\ref{moore}~(a) is contractible. 
Also, by Lemma~\ref{2egs}~(b), each term $\cyy^{\partial\sigma_{i}}$ in 
Theorem~\ref{moore}~(a) and~(b) is homotopy equivalent to a simply-connected sphere. 

\begin{proof}[Proof of Theorem~\ref{intromacmoore}]  
Theorem~\ref{moore}~(b) implies that if $K$ has two minimal missing 
faces with nontrivial intersection then a wedge of two simply-connected spheres 
retracts off $\Omega\uxa^{K}$. The Hilton-Milnor Theorem shows that a 
wedge of two such spheres is hyperbolic, and Neisendorfer and Selick~\cite{NS} 
showed that a wedge of two such spheres has no exponent at any prime $p$. Hence 
Moore's conjecture holds in this case. On the other hand, if all the minimal 
missing faces of $K$ are mutually disjoint then Theorem~\ref{moore}~(a) 
implies that $\Omega\uxa^{K}$ is homotopy equivalent to a finite 
product of spheres. This is elliptic, and as each sphere has an exponent at 
every prime $p$, so does a finite product of them. Hence Moore's Conjecture 
holds in this case as well. 
\end{proof} 

\begin{proof}[Proof of Theorem~\ref{introellhyp}] 
Recall that if $Y$ is any space then $\Sigma Y$ is rationally homotopy 
equivalent to a wedge of spheres. In particular, if $\partial\sigma\subseteq K$ 
and each $Y_{i}$ is rationally nontrivial 
then by Lemma~\ref{2egs}~(b) the space $\cyy^{\partial\sigma}$ is rationally 
homotopy equivalent to a wedge of simply-connected spheres. Thus if $v$ is a vertex 
of $\partial\sigma$ and $\mbox{rank}(\pi_{\ast}(Y_{v})\otimes\mathbb{Q})\geq 2$ 
then $\cyy^{\partial\sigma}$ is rationally homotopy equivalent to a wedge of at 
least two simply-connected spheres. 

Suppose that $\uxa^{K}$ is elliptic. The homotopy decomposition  
$\Omega\uxa^{K}\simeq(\prod_{i=1}^{m}\Omega X_{i})\times\Omega\cyy^{K}$ 
in Theorem~\ref{cyysplit} then immediately implies that each $X_{i}$ must be 
elliptic, so condition~(i) holds. This homotopy decomposition also implies 
that $\cyy^{K}$ is elliptic. Let $\sigma_{1},\ldots,\sigma_{n}$ be the 
minimal missing faces of $K$. If two of these minimal missing faces intersect, 
say $\sigma_{1}$ and $\sigma_{2}$, then Theorem~\ref{moore} implies that 
$\Omega\left(\cyy^{\partial\sigma_{1}}\vee\cyy^{\partial\sigma_{2}}\right)$ 
retracts off $\Omega\cyy^{K}$. Since each of $\cyy^{\partial\sigma_{1}}$ 
and $\cyy^{\partial\sigma_{2}}$ is rationally homotopy equivalent to a wedge 
of simply-connected spheres, the space 
$\cyy^{\partial\sigma_{1}}\vee\cyy^{\partial\sigma_{2}}$ 
is rationally homotopy equivalent to a wedge of at least two simply-connected 
spheres, implying that it is hyperbolic. Therefore $\cyy^{K}$ is hyperbolic, a 
contradiction. Hence the minimal missing faces of $K$ must be mutually disjoint, 
implying that condition~(ii) holds. Because condition~(ii) holds, Theorem~\ref{moore} 
implies that 
$\Omega\cyy^{K}\simeq\prod_{j=1}^{n}\Omega\cyy^{\partial\sigma_{j}}$. 
It has already been observed that if $v$ is a vertex of $\partial\sigma_{j}$ 
and $\mbox{rank}(\pi_{\ast}(X_{v})\otimes\mathbb{Q})\geq 2$ then 
$\cyy^{\partial\sigma_{j}}$ is rationally homotopy equivalent to a wedge of at 
least two simply-connected spheres, and so is hyperbolic, implying that $\cyy^{K}$ 
is hyperbolic, a contradiction. Thus condition~(iii) holds. 

Conversely, suppose that conditions (i) to (iii) hold. By Theorem~\ref{moore}, 
condition~(ii) implies that 
$\Omega\uxa^{K}\simeq 
    (\prod_{i=1}^{m}\Omega X_{i})\times(\prod_{j=1}^{n}\Omega\cyy^{\partial\sigma_{j}})$, 
where $\sigma_{1},\ldots,\sigma_{n}$ are the minimal missing faces of~$K$.  
For each vertex $v$ of any $\sigma_{i}$, condition~(iii) states that $Y_{v}$ is rationally 
homotopy equivalent to a sphere. Therefore Lemma~\ref{2egs}~(b) implies that  
$\cyy^{\partial\sigma_{i}}$ is rationally homotopy equivalent to a sphere.  
As each $X_{i}$ is elliptic by condition~(i), it has finitely many rational homotopy 
groups. Hence the homotopy decomposition for $\Omega\uxa^{K}$ implies 
that $\uxa^{K}$ has finitely many rational homotopy groups and so is elliptic. 
\end{proof}

\bibliographystyle{amsplain}

\end{document}